\newcommand{\Gcal}{\mathcal{G}}
\newcommand{\Kcal}{\mathcal{K}}
\newcommand{\Ucal}{\mathcal{U}}
\newcommand{\Xcal}{\mathcal{X}}
\newcommand{\Ycal}{\mathcal{Y}}
\newcommand{\ch}{\mathbf{1}}
\newcommand{\Z}{\mathbb{Z}}
\newcommand{\N}{\mathbb{N}}
\newcommand{\Xb}{\mathbf{X}}
\newcommand{\Yb}{\mathbf{Y}}
\newcommand{\al}{\alpha}
\newcommand{\Ga}{\Gamma}
\newcommand{\ga}{\gamma}
\newcommand{\ep}{\epsilon}
\newcommand{\Sig}{\Sigma}
\newcommand{\La}{\Lambda}
\newcommand{\ol}{\overline}
\newcommand{\br}{\vspace{3 mm}}
\newcommand{\nor}{\vartriangleleft}
\newcommand{\tri}{\bigtriangleup}
\newcommand{\Aut}{{\rm{Aut\,}}}
\newcommand{\id}{{\rm{id}}}
\theoremstyle{plain}
\newtheorem{thm}{Theorem}[section]
\theoremstyle{definition}
\newtheorem{rem}[thm]{Remark}
\newtheorem{exa}[thm]{Example}
\newenvironment{psmallmatrix}
  {\left(\begin{smallmatrix}}
  {\end{smallmatrix}\right)}
\begin{document}

%%%%%%%%%%%%%%%%%%%%%%%%%%%%%%%%%%%%%%%%%%%%%%%%%%%%%%%%%%%%%%%%%%%%
%%%%%%%%%%%%%%%%%%% T H E    T I T L E    %%%%%%%%%%%%%%%%%%%%%%%%%%
%%%%%%%%%%%%%%%%%%%%%%%%%%%%%%%%%%%%%%%%%%%%%%%%%%%%%%%%%%%%%%%%%%%%

\title[Distal strongly ergodic actions]
{Distal strongly ergodic actions}

\author{Eli Glasner and Benjamin Weiss}

\address{Department of Mathematics\\
     Tel Aviv University\\
         Tel Aviv\\
         Israel}
\email{glasner@math.tau.ac.il}
\address {Institute of Mathematics\\
 Hebrew University of Jerusalem\\
Jerusalem\\
 Israel}
\email{weiss@math.huji.ac.il}

%
%\begin{date}
%{January 30, 2020}
%\end{date}

%%%%%%%%%%%%%%%%%%%%%%%%%%%%%%%%%%%%%%%%%%%%%%%%%%%%%%%%%%%%%%%%%%%%%
%%%%%%%%%%%%%%%%%%%%%%%%  CONTENTS   %%%%%%%%%%%%%%%%%%%%%%%%%%%%%%%%
%%%%%%%%%%%%%%%%%%%%%%%%%%%%%%%%%%%%%%%%%%%%%%%%%%%%%%%%%%%%%%%%%%%%%

%\tableofcontents
\setcounter{secnumdepth}{2}

%\addtocontents{toc}{subsection}{\protect\hspace{0.5cm}}

%28.11.03 version
%%%%%%%%%%%%%%%%%%%%%%%%%%%%%%%%%%%%%%%%%%%%%%%%%%%%%%%%%%%%%%%%%%%%%
%%%%%%%%%%%%%%%%%%%%%%%%%%%%  TEXT   %%%%%%%%%%%%%%%%%%%%%%%%%%%%%%%%
%%%%%%%%%%%%%%%%%%%%%%%%%%%%%%%%%%%%%%%%%%%%%%%%%%%%%%%%%%%%%%%%%%%%%

\setcounter{section}{0}
%\setcounter{page}{0}

%%%%%%%%%%%%%%%%%%%%%%%%%%%%%%%%%%%%%%%%%%%%%%%%%%%%%%%%%%%%%%%%%%%%%
%%%%%%%%%%%%%%%%%%%       Inroduction         %%%%%%%%%%%%%%%%%%%%%%%
%%%%%%%%%%%%%%%%%%%%%%%%%%%%%%%%%%%%%%%%%%%%%%%%%%%%%%%%%%%%%%%%%%%%%

\begin{abstract}
Let $\eta$ be an arbitrary countable ordinal. Using results of Bourgain, Gamburd and Sarnak
on compact systems with spectral gap
we show the existence of an action of the free group on three generators $F_3$ on a compact metric space $X$,
admitting an invariant probability measure $\mu$, such that the resulting dynamical system
$(X, \mu, F_3)$ is strongly ergodic and distal of rank $\eta$. In particular this shows that there is a
$F_3$ system which is strongly ergodic but not compact. 
This result answers the open question whether such actions exist.
\end{abstract}
 
\subjclass[2010]{Primary  37B05, Secondary 22D40}

\keywords{distal action, strong ergodicity, spectral gap}

\begin{date}
{September 23, 2020}
\end{date}

\maketitle

%%%%%%%%%%%%%%%%%%%%%%%%%%%%%%%%%%%%%%%%%%%%%%%%%%%%%%%%%%%%%%%%%%%%%
%%%%%%%%%%%%%%%%%%%%%%%%  CONTENTS   %%%%%%%%%%%%%%%%%%%%%%%%%%%%%%%%
%%%%%%%%%%%%%%%%%%%%%%%%%%%%%%%%%%%%%%%%%%%%%%%%%%%%%%%%%%%%%%%%%%%%%

%\tableofcontents
\setcounter{secnumdepth}{2}

%\addtocontents{toc}{subsection}{\protect\hspace{0.5cm}}

%28.11.03 version
%%%%%%%%%%%%%%%%%%%%%%%%%%%%%%%%%%%%%%%%%%%%%%%%%%%%%%%%%%%%%%%%%%%%%
%%%%%%%%%%%%%%%%%%%%%%%%%%%%  TEXT   %%%%%%%%%%%%%%%%%%%%%%%%%%%%%%%%
%%%%%%%%%%%%%%%%%%%%%%%%%%%%%%%%%%%%%%%%%%%%%%%%%%%%%%%%%%%%%%%%%%%%%

\setcounter{section}{0}
%\setcounter{page}{0}

%%%%%%%%%%%%%%%%%%%%%%%%%%%%%%%%%%%%%%%%%%%%%%%%%%%%%%%%%%%%%%%%%%%%%
%%%%%%%%%%%%%%%%%%%       Inroduction         %%%%%%%%%%%%%%%%%%%%%%%
%%%%%%%%%%%%%%%%%%%%%%%%%%%%%%%%%%%%%%%%%%%%%%%%%%%%%%%%%%%%%%%%%%%%%
%Benjy :  
%
%I would like a compact group $K$ with $3$ elements $\{a,b,c\}$ that generate a free subgroup 
%$F_3$ dense in $K$ such that there are, under the
%left multiplication, no nonconstant approximately invariant $L^2$ functions. 
%In addition, if $K_0$ is the closure of the $F_2$ generated by $\{a,b\}$ I want the same to be true for $K_0$.
%So far the Bourgain-Gamburd paper gives me what I want. 
%However, I would also like to have a different compact group $L$ with an $F_2$ dense in it 
%generated by $\{f,g\}$ so that it too has a spectral gap AND in addition:
%The diagonal action of $F_2$ on $K \times L$ also has a spectral gap.
%
%
%Alex :
%
%You can use Bourgain-Gamburd directly. Let me give a concrete example:
%

\section*{Introduction}

In this note we construct a strongly ergodic, distal, non-compact system
for the free group $F_3$ on three generators. 
This answers a question of Ibarluc\'{i}a,  Le Maitre, Tsankov and Tucker-Drob. 
Moreover, we show that for an arbitrary countable ordinal $\eta$, 
there is a strongly ergodic, distal system of rank $\eta$.

The question was brought to our attention 
%This note answers a question raised
 in a conversation during the conference entitled ``Structure and Geometry of Polish groups" 
%(17w5094)
held in Oaxaca, Mexico, June, 2017. 
%The participants in this conversation were : 
%Eli Glasner, Tom\'as Ibarluc\'{i}a, Fran\c{c}ois Le Maitre, Todor Tsankov and Robin Tucker-Drob.
The first named author thanks BIRS and the organizers for the invitation to 
participate in this very successful conference.
We thank Alex Lubotzky for calling the work \cite{BGS-10} to our attention.

\br

\section{Some preliminaries}

\subsection{Structure theory}

Let $\Ga$ be a discrete countable infinite group.
A {\em $\Ga$-dynamical system} is a quadruple $\Xb = (X, \mathcal{X}, \mu, T)$,
where $(X,\mathcal{X},\mu)$ is a standard probability space
and $\ga \mapsto T_\ga$ is a homomorphism from $\Ga$ into the 
Polish group $\Aut(X,\mu)$ of invertible measure preserving transformation of
$(X, \mathcal{X}, \mu)$. 
When there is no room for confusion we write $\ga x$ instead of $T_\ga x$.
When $\mathbf{X}$ and $\mathbf{Y} = (Y. \mathcal{Y}, \nu, S)$ are two dynamical systems, we say that
$\mathbf{Y}$ is a {\em factor} of $\mathbf{X}$ ( or that 
$\mathbf{X}$ is an {\em extension} of $\mathbf{Y}$) is there is 
a measurable map $\pi : X \to Y$ such that $\pi_*(\mu)=\nu$ and such that
$\pi(T_\ga x) = S_\ga\pi(x)$ for every $\ga \in \Ga$ and $\mu$ almost every $x \in X$.
The map $\pi$ is called a {\em factor map} (or an {\em extension}).

The system $\mathbf{X}$ is {\em ergodic} if every $\Ga$-invariant set $A \in \mathcal{X}$
(i.e. $T_\ga A = A \pmod \mu$ for every $\ga \in \Ga$)  is trivial (i.e. $\mu(A)(1 -  \mu(A))=0$).

Let $\Yb$ be a dynamical system and $(U,\Ucal,\rho)$
a standard probability space. Let $\al: \Ga\times Y\to
{\Aut}(U,\rho)$ be a measurable cocycle;   that is
$\al$ satisfies the {\em cocycle equation}
$$
\al(\ga\ga',y)=\al(\ga,\ga' y)\al(\ga',y).
$$
We define the {\em skew-product system\/}\label{def-skew-prod}
$\Yb  \times_\al (U,\rho)$ to be the system
$(Y\times U,\Ycal\otimes\Ucal,\mu\times\rho,\Ga)$, where
$\ga(y,u)=(\ga y,\al(\ga,y)u)$. (Check that this indeed
defines an action of $\Ga$ on $X=Y\times U$.)
When $\Ga=\Z$ with the measure-preserving invertible map
$T$ as the generator of the $\Z$-system, a cocycle $\al$
is completely determined by the map $\al(y)=\al(T,y)$ and we have
$$
\al(n,y)=\begin{cases}
\al(T^{n-1} y)\cdots\al(Ty)\al(y) & {\text{for}}\  n\ge 1\\
\id & {\text{for}}\  n=0\\
\al(T^n y)^{-1}\cdots\al(T^{-1} y)^{-1} & {\text{for}}\  n< 0.
\end{cases}
$$
In the special case where $U$ is a compact group and $\rho$ is its normalized Haar measure,
any measurable function $\al : Y \to U$ defines a skew product by the formula:
$$
T(y,u) = (Ty, \al(y)u), \ y \in Y, u \in U.
$$

We have the following basic theorem:

\begin{thm}[Rohlin]\label{rsp}
Let $\Xb\to\Yb$ be a factor map of dynamical systems with
$\Xb$ ergodic, then $\Xb$ is isomorphic to a skew-product
over $\Yb$. Explicitly, there exist a standard probability space
$(U,\Ucal,\rho)$ and  a measurable cocycle $\al: \Ga\times Y\to
{\Aut}(U,\rho)$ with $\Xb\cong \Yb \times_\al (U,\rho)
=(Y\times U,\Ycal\otimes\Ucal,\nu\times\rho,\Ga)$, where
$\ga(y,u)=(\ga y,\al(\ga,y)u)$.
\end{thm}

The topology on $\Aut(X,\mu)$ is induced by a complete metric
$$
D(S,T) = \sum_{n \in \N} 2^{-n}  (\mu(SA_n \tri TA_n) + \mu(S^{-1}A_n \tri T^{-1}A_n)), 
$$
with $\{A_n\}_{n \in \N}$ a dense sequence in the measure algebra
$(\mathcal{X},d_\mu)$, where $d_\mu(A,B) = \mu(A \tri B)$.
Equipped with this topology $\Aut(X, \mu)$ is a Polish topological group and we say that
the dynamical system $\mathbf{X}$ is {\em compact} if the 
image $\{T_{\ga} : \ga \in \Ga\}$ is a precompact subgroup of $\Aut(X,\mu)$.

\begin{exa}
Let $\Ga = F_2$, the free group of rank $2$. Let
$$
X = \lim_{\leftarrow} \{\Ga/ N : N \nor \Ga, \ {\text{with}}\ [\Ga:N] < \infty\}.
$$
This is the {\em profinite completion} of $\Ga$. It is a compact metrizable topological group
and thus admits a unique normalized Haar measure $\mu$.
There is a canonical embedding $\phi : \Ga \to X$ with a dense image
and for $\ga \in \Ga$ we let $T_\ga x = \phi(\ga)x, \ x \in X$. With $\mathcal{X}$ the 
algebra of Borel subsets of $X$, $\Xb = (X, \mathcal{X}, \mu, T)$ is an 
ergodic compact $F_2$ dynamical system.
\end{exa}

It turns out that, in fact, every ergodic compact $\Ga$ system $\mathbf{X}$ has the following form
$X = K/H$, where $K$ is a compact metrizable topological group, $H <K$ is a closed subgroup,
$\mu$ is the induced Haar measure on $X$, and the action of $\Ga$ on $X$ is via a 
homomorphism $\phi : \Ga \to K$ with dense image so that
$T_\ga kH = \phi(\ga)kH,\ k \in K$.

The notion of compactness can now be relativized as follows:

\br

%\begin{defn}
An extension $\pi : \Xb \to \Yb$ is a {\em compact extension} if there is a compact metrizable group $K$,
a closed subgroup $H < K$ and a cocycle $\al : \Ga \times Y  \to K$ such that
$$
\Xb\cong \Yb \times_\al (K/H,\rho)
=(Y\times K/H,\Ycal\otimes\Kcal,\nu\times\rho,\Ga),
$$
where $\rho$ is the Haar measure on $K/H$ and for each $\ga \in \Ga$,
$\ga(y, kH)=(\ga y,\al(\ga,y)kH)$.
%\end{defn}

\br

This construction can be iterated and a dynamical system $\mathbf{X}$ is called {\em distal} 
if it is an iteration of countably many compact extensions, where in the
(possibly transfinite construction) at a limit ordinal one takes an inverse limit.
The so called {\em distal tower} is unique if at each stage one takes the maximal compact extension
(within $\Xb$).
The height of this tower (a countable ordinal) is called the {\em rank} of the distal system $\Xb$,
so the height of a  compact action is 1.

\br

An extension of dynamical systems $\pi: \Xb \to \Yb$ is called a {\em {weakly mixing extension}}
when the corresponding relative product
$(X \underset Y \times X, \mu \underset \nu \times \mu, \Ga)$ is ergodic.
In particular $\Xb$ is {\em {weakly mixing}} when the product system $\Xb \times \Xb$
is ergodic. 

We now can state the following

\begin{thm}[Furstenberg-Zimmer structure theorem]
Every ergodic system $\Xb$ has (uniquely) a largest distal factor $\pi : \Xb \to \Yb$
and the extension $\pi$ is a weakly mixing one.
\end{thm}

In \cite{BF} Beleznay and Foreman show that for $\Ga=\mathbb{Z}$, for every countable ordinal 
$\eta$ there is an ergodic distal system of height $\eta$.
%This is most probably the case for any countable amenable group in lieu of $\Z$
%(although no proof of this is to be found in the literature).
%

We refer e.g. to \cite{G-03} for more details on structure theory in ergodic theory. 
\br

\subsection{Strong ergodicity, Kazhdan's property and expanders}

%\begin{defn}
%A dynamical system $\Xb$ is called {\em strongly ergodic} if 
%\end{defn}

Recall that a probability measure preserving action $(X, \mathcal{X}, \mu, G)$ 
is called {\em strongly ergodic} if there is no sequence of sets $A_n \in \mathcal{X}$
with $\mu(A_n)=1/2, \forall n \in \N$ such that
$$
\lim \mu(g A_n \tri A_n) =0, \quad \forall g \in G.
$$
(Informally,  we say that $\Xb$ admits no {\em almost invariant sets}.)

\br

Theorems of Connes and Weiss \cite{CW}, Connes, Feldman and Weiss  \cite{CFW} 
and  Schmidt \cite[Theorems 2.4 and 2.5]{Sch}, asserts that:

\br

(i) The group $G$ is amenable iff every nontrivial ergodic $G$-system is not strongly ergodic, \cite{CFW}
and \cite{Sch}.

(ii) The group $G$ has Kazhdan's property T iff every ergodic $G$-system is strongly ergodic, \cite{CW}.

\br

The {\em Cheeger constant} of a finite $k$-regular graph $\Gcal$ is defined as
$$
h(\Gcal) = \min \left \{\frac{|\partial A|}{|A|} : A \subset V(\Gcal), \ 0 < |A| \leq \frac{|V(\Gcal)|}{2} \right \}.
$$
The graph $\Gcal$ is an {\em $\ep$-expander} if $h(\Gcal) > \ep$.

A family $\{G_i\}$ of finite groups is called an {\em expander family} if for some $k$ and $\ep>0$
there are generating sets $\Sig_i$ for $G_i$ with $|\Sig_i| \leq k$, such that all the Cayley graphs
$\Gcal(G_i, \Sig_i)$ are $\ep$-expanders.

\begin{exa}\label{exa}
Let $S$ be a set of elements in $SL_2(\Z)$. 
If $\langle S \rangle$, the group generated by $S$, is a finite index subgroup of 
$SL_2(\Z)$, Selberg's theorem \cite{S-65} implies (see e.g. \cite{L-94} Th. 4.3.2 
and Ex. 4.3.3.D) 
that $\mathcal{G}(SL_2(\Z_p), S_p)$, the Cayley graphs of $SL_2(\Z_p) = SL_2(\Z/p\Z)$ with respect 
to $S_p$, the natural projection of $S$ modulo $p$, form a family of expanders as $p\to \infty$.
For example we can take
$$
S = \left\{ 
\begin{pmatrix}
1 & 1 \\
0 & 1
\end{pmatrix},  
\begin{pmatrix}
1 & 0 \\
1 & 1
\end{pmatrix}  
\right\}.
$$
%Let $\Ga = SL_2(\Z)$ and
%$$
%S = \left\{ 
%\begin{pmatrix}
%1 & 1 \\
%0 & 1
%\end{pmatrix},  
%\begin{pmatrix}
%1 & 0 \\
%1 & 1
%\end{pmatrix}  
%\right\}.
%$$
%Let $G_p = SL_2(\Z/p\Z)$ and set $\Sig_p = S \pmod p$.
%Then Selberg's theorem asserts that the family $\{G_p : p \text{prime}\}$ forms an expanding family.
\end{exa}

\begin{rem}
Suppose $G$ is a finite group generated by $S \subset G$, such that $h(\Gcal(G,S)) > \ep$.
Then for $A \subset G$ with $|A| \approx \frac{|G|}{2}$ we have $\partial A \geq \ep |A|$ and
$|SA \tri A| \geq \ep |A|$.
We can interpret this as the absence of almost invariant subsets.
\end{rem}
%This idea is made precise, e.g. in Selberg's example as follows:
%Consider the compact group
%$$
%K = \prod \{SL_2(\Z_p) :  p \ \text{prime}\},
%$$
%equipped with its Haar measure $\la$ as a dynamical system of the free group
%$F_2$ acting on $K$ via the elements of $S$, as above.
%The fact that the family $\{\Gcal(SL_2(\Z_p), S)\}$ is an expander
%implies that the dynamical system $\Xb = (K, \la, F_2)$ is strongly ergodic.
%\end{rem}
%
%
%When the set $S \subset SL_2(\Z)$ in Example \ref{exa} generates a subgroup of infinite index the question
%whether the sequence $\mathcal{G}(SL_2(\Z_p), S_p)$ forms an expander family is much harder.
%Fortunately for the purposes of our construction we have the following deep result of
%Bourgain and Gamburd \cite[Theorem 1]{BG-08}.
%
%
%\begin{thm}
%Let $S$ be a set of elements in $SL_2(\Z)$. Then the 
%$\mathcal{G}(SL_2(\Z_p),S_p)$ form a family of expanders if and only if $\langle S \rangle$ is 
%non-elementary, i.e. the limit set of $\langle S \rangle$ consists of more than two points 
%(equivalently, $\langle S \rangle$ does not contain a solvable subgroup of finite index).
%\end{thm}
%

In our construction we will have a finite set 
$S \subset SL_2(\Z)$ that generates a Zariski dense subgroup $\La$ and we will consider the
diagonal action of $\La$ on products of the form
$$
K_P = \prod  \{SL_2(\Z_p) : p \in P\}
$$
taken over an infinite set of primes $P$.
In order to ensure that this action is strongly ergodic we need to know that finite products
$$
K_F = \prod  \{(SL_2(\Z_p), S_p) : p \in F\},
$$
where $F \subset P$ is finite and $S_p$ is the image of $S$ in $SL_2(\Z_p)$,
form an expander family.
In our situation $S_p$ in the product will generate $K_F$ and since such a finite product
is isomorphic to $SL_2(\Z_q)$, where $q = \prod \{p : p \in F\}$, we will need to know that
the family $\{SL_2(\Z_q), S_q)\}$ is a family of expanders.
This latter fact is established in the following theorem of  Bourgain, Gamburd and Sarnak
 \cite[Theorem 1.2]{BGS-10}.

%
%\begin{thm}
%Let $S$ be a set of elements in $SL_2(\Z)$. Then the 
%$\mathcal{G}(SL_2(\Z_p),S_p)$ form a family of expanders if and only if $\langle S \rangle$ is 
%non-elementary, i.e. the limit set of $\langle S \rangle$ consists of more than two points 
%(equivalently, $\langle S \rangle$ does not contain a solvable subgroup of finite index).
%\end{thm}
%

\begin{thm}\label{BGS}
Let  $\Lambda$ be a Zariski dense subgroup of $SL_2(\Z)$ 
and let $S$ be a finite symmetric set of generators for $\Lambda$. 
Then for $q$ square-free the family of Cayley graphs $\mathcal{G}( \Lambda/ \Lambda(q),S)$ 
is an expander family.
\end{thm}

\br

%In \cite{B-F} Beleznay and Foreman show that for $\Ga=\mathbb{Z}$, for every countable ordinal 
%$\eta$ there is an ergodic distal system of hight $\eta$.
%On the other hand  
In a sharp contrast to the result of Belezney and Foreman \cite{BF} mentioned above,
Chifan and Peterson (unpublished) and, independently, Ibarluc\'{i}a and Tsankov,
\cite{IT}, show that for $\Ga$ with Kazhdan's property T, every distal system is already compact.
(In fact, they show that every distal extension of an ergodic system is a compact extension.)

%In view of this result one naturally arrives at the question 
%%Until recently, it was an open question 
%whether strongly ergodic, distal, non-compact systems exist ?

This raises the question whether a group $G$ which is neither amenable nor a Kazhdan group
can admit a strongly ergodic distal action which is not compact.
In the next section we will answer this question positively for the free group  on three generators $G = F_3$.

% It was resolved by Glasner and Weiss [GW] who, using results of Bourgain and Gamburd on compact systems with spectral gap, constructed examples for ? a free group.

%\br
%
%In this note we answer this question affirmatively by constructing a strongly ergodic, distal, non-compact system
%for the free group $F_3$ on three generators. 
%
%The question was brought to our attention 
%%This note answers a question raised
% in an after dinner conversation
%during the conference entitled ``Structure and Geometry of Polish groups" 
%%(17w5094)
%held in Oaxaca, Mexico, June, 2017. 
%%The participants in this conversation were : 
%%Eli Glasner, Tom\'as Ibarluc\'{i}a, Fran\c{c}ois Le Maitre, Todor Tsankov and Robin Tucker-Drob.
%The first named author thanks BIRS and the organizers for the invitation to 
%participate in this very successful conference.
%We thank Alex Lubotzky for calling the work \cite{BG-08} to our attention.

\br

\section{An example of a strongly ergodic, distal but not compact $F_3$ dynamical system}

Our goal is to construct an $F_3$ action which is strongly ergodic, distal but not compact.

%
%Let $S$ be a set of elements in $SL_2(\Z)$. 
%If $\langle S \rangle$, the group generated by $S$, is a finite index subgroup of 
%$SL_2(\Z)$, Selberg's theorem \cite{S-65} implies (see e.g. \cite{L-94} Th. 4.3.2 
%and Ex. 4.3.3.D) 
%that $\mathcal{G}(SL_2(\Z_p), S_p)$, the Cayley graphs of $SL_2(\Z_p)$ with respect 
%to $S_p$, the natural projection of $S$ modulo $p$, form a family of expanders as $p\to \infty$.
%
%The following theorem is due to Bourgain and Gamburd \cite[Theorem 1]{BG-08}.
%
%
%\begin{thm}
%Let $S$ be a set of elements in $SL_2(\Z)$. Then the 
%$\mathcal{G}(SL_2(\Z_p),S_p)$ form a family of expanders if and only if $\langle S \rangle$ is 
%non-elementary, i.e. the limit set of $\langle S \rangle$ consists of more than two points 
%(equivalently, $\langle S \rangle$ does not contain a solvable subgroup of finite index).
%\end{thm}
%

%Denoting by $F_3$ the free group on three generators 
We want to find compact metrizable groups $K$ and $L$ with the following properties:
\begin{itemize}
\item
$K$ contains three elements $a, b$ and $c$ such that the subgroup
$\La= \langle a, b, c \rangle$ generated by them is free and dense in $K$,
$\ol{\La} = K$. 
\item
The subgroup  $\La_0 = \langle a, b \rangle < \La$ is also dense in $K$,
$K= \ol{\La_0}$. 
\item
$L$ contains two elements $f, g$ such that the subgroup
$ \Sig =\langle f, g \rangle$ generated by them is free and dense in $L$,
$L =  \ol{\Sig}$. 
%\item
%$K = \ol{F_3} =  \ol{\langle a, b, c \rangle}$, 
%\item
%$K_0 = \ol{F_2} =  \ol{\langle a, b \rangle} < K$, 
%\item 
%$K_1 = \ol{F_1} =  \ol{\langle c \rangle} < K$,
%\item
%$L = \ol{F_2} =  \ol{\langle f, g \rangle}$.
\end{itemize}
Moreover,
denoting by $F_2, F_3$ the free groups on two and three generators respectively, 
all of the following actions (under left multiplication
and with respect to the corresponding Haar measures)
are strongly ergodic:
\begin{enumerate}
\item
the action of $F_3$ via $\La$ on $K$,
\item
the action of $F_2$ via $\La_0$ on $K$,
\item
the action of $F_2$ via $ \Sig$ on $L$,
\item
the diagonal action of $F_2$ via $\langle (a,f), (b,g) \rangle$ on $K \times L$.
\end{enumerate}
(Of course conditions (1) - (3) follow from condition (4).)

\br

Here is a construction of such groups.

Let 
%$x=E_{12}^{(2)}$ and $y=E_{21}^{(2)}$
$x=\begin{psmallmatrix}1 & 2\\0 & 1\end{psmallmatrix}$
and 
$y = \begin{psmallmatrix}1 & 0\\2 & 1\end{psmallmatrix}$ 
be two elementary 
matrices generating a free group, which we 
call $H$, in $SL_2(\Z)$ (it is actually of finite index).
Let 
%$a=x^2=E_{12}^{(4)}$
$a = x^2 = \begin{psmallmatrix}1 & 0\\4 & 1\end{psmallmatrix}$ 
and 
%$b=y^2=E_{12}^{(4)}$
$b = y^2 = \begin{psmallmatrix}1 & 4\\0 & 1\end{psmallmatrix}$, 
and let $c$ be another element in 
$H$ which together with $a$ and $b$ generate a free group $\La$ on $3$ generators 
(such an element clearly exists as a and b generate a subgroup of infinite index in $H$).
%Let now $\La_p = \La/\La(p)$ 
% and $\Sig_p= \Sig/\Sig(p)$ be the images of 
%$\La$ and $\Sig$ in $SL_2(\Z_p)$ 
Now set
%$$
%K = \prod \{SL_2(\Z_p) : p\ {\text{prime, \ and}}\ \ p \equiv1\pmod{4}\}
%$$
%and
%$$
%L = \prod \{SL_2(\Z_p) : p\ {\text{prime, \ and}}\ \ p \equiv3\pmod{4}\}.
%$$
$$
K = \prod \{\La_p : p\ {\text{prime, \ and}}\ \ p \equiv1\pmod{4}\}
$$
and
$$
L = \prod \{\La_p : p\ {\text{prime, \ and}}\ \ p \equiv3\pmod{4}\},
$$
where $\La_p$ is the image of $\La$ in $SL_2(\Z_p)$.

Note that, by the Chinese remainder theorem,
as  $a = x^2 = \begin{psmallmatrix}1 & 0\\4 & 1\end{psmallmatrix}$
and $b = y^2 = \begin{psmallmatrix}1 & 4\\0 & 1\end{psmallmatrix}$,
we have that, for any finite set of distinct primes $p_1,p_2,\dots ,p_t$ ($p_i \not = 2$),
denoting $q = p_1p_2\cdots p_t$,
the image of $\La_0$ in $SL_2(\Z_q)$ is all of $SL_2(\Z_q)$.

The strong ergodicity of $F_2$ via $\La_0$ on $K$ follows from Theorem \ref{BGS}.
Indeed if $\ch_A$ is the indicator function of an almost invariant set $A$ with measure $\approx 1/2$
in $K$, then its projection onto a sufficiently large finite product will give rise to an almost invariant
nontrivial function, contrary to the fact that the family is a family of expanders.

Of course the arguments for the actions on $L$ and $K \times L$ are analogous.

\br

Next let $K_1 =  \ol{\langle c \rangle} < K$. 
By \cite{Z-77} (see also \cite{GW}) there is a cocycle $\phi_0 : K_1 \to L$ such that the corresponding $\Z$-action
on $K_1 \times L$ given by
$$
T_c(x,y) = (cx, \phi_0(x) y), \quad x \in K_1, y \in L,
$$ 
is ergodic.
Note that, as $L$ is non-commutative this distal $\Z$-action, is necessarily not compact.
To see this recall that $T_c$ is ergodic and if it would be compact, as is well known,
its centralizer would be commutative. In fact, ergodic compact $\Z$-actions are rotations of
compact monothetic groups and the centralizer is the compact group itself.

Define $\phi : K \to L$ by the formula
$$
\phi(x) = \phi_0(k_t^{-1}x) \quad {\text {for $x$ in the coset $k_tK_1$}},
$$
where $t \mapsto k_t,  K/K_1 \to K$ is a Borel section for the map $K \to K/K_1$.

On $X \times Y := K \times L$ define an $F_3$ action $\{T_t\}_{t \in F_3} : K \times L \to K\times L$ by:
\begin{gather*}
T_a(x,y) = (ax, fy)\\
T_b(x,y) = (bx, gy)\\
T_c(x,y) = (cx, \phi(x)y).
\end{gather*}
%where we choose $\phi : K_1 \to L$ so that $T_c : K_1 \times L \to K_1 \times L$ is ergodic and
%not cohomologous to a constant ($\phi$ depends only on its first variable).
%

\br

With this data at hand we can now prove our main result:

\begin{thm}\label{dnc}
The $T$ action of $F_3$ on $K \times L$ is distal, not compact, and strongly ergodic.
\end{thm}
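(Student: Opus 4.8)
The plan is to check the three properties separately, exploiting the tower structure of the construction for distality and non‑equicontinuity, and the already established spectral gap of action (4) for strong ergodicity.

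\emph{Distality.} The coordinate projection $p\colon K\times L\to K$, $(x,y)\mapsto x$, intertwines the $T$‑action with the rotation action of $F_3$ on $K$ via $\Lambda$; since $\La$ is dense in $K$ this is a minimal rotation on a compact group, hence equicontinuous (and distal). Moreover $p$ exhibits $K\times L$ as a compact group extension of $K$: on each fibre $\{x\}\times L$ the transformation $T_t$ acts by a left translation of $L$ — by $f$ and $g$ for $t=a,b$, and by $\phi(x)$ for $t=c$ — and left translations are isometries of any bi‑invariant metric on $L$, so the extension is isometric. An isometric (a fortiori compact‑group) extension of a distal system is distal, so the $T$‑action is distal; since $\phi$ is only Borel this should be read in the Furstenberg--Zimmer (measure‑theoretic) sense, the system sitting in the two‑step tower $\{\mathrm{pt}\}\leftarrow(K,m_K,F_3)\leftarrow(K\times L,\,m_K\otimes m_L,\,F_3)$ of compact extensions. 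This step is routine.

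\emph{Strong ergodicity.} Restrict the $T$‑action to the subgroup $\langle a,b\rangle\cong F_2$ of $F_3$. On $K\times L$ it acts by $T_a(x,y)=(ax,fy)$, $T_b(x,y)=(bx,gy)$, which is precisely action (4), the diagonal action of $F_2$ via $\langle(a,f),(b,g)\rangle$ with respect to Haar measure; by the discussion preceding the theorem this action has the spectral gap property, hence is strongly ergodic. Now any sequence $A_n$ with $(m_K\otimes m_L)(A_n)=1/2$ and $(m_K\otimes m_L)(T_tA_n\tri A_n)\to0$ for all $t\in F_3$ is in particular such a sequence for $t\in\langle a,b\rangle$, so no such sequence exists and the $T$‑action of $F_3$ is strongly ergodic. (Equivalently: the Koopman representation of $F_3$ has a spectral gap because already its restriction to $\langle a,b\rangle$ does.) This step is also immediate.

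\emph{Non‑equicontinuity, and the main obstacle.} The set $K_1\times L$ is closed and $T_c$‑invariant (as $c\in K_1$ and $\phi\rest K_1=\phi_0$), and on it, equipped with $m_{K_1}\otimes m_L$, $T_c$ is exactly the ergodic $\Z$‑action of \cite{Z-77}. That system is not equicontinuous: the right translations $R_\ell(x,y)=(x,y\ell)$ form a group of measure‑preserving automorphisms isomorphic to $L$ which commutes with $T_c$ and acts freely, while the centralizer of an ergodic discrete‑spectrum $\Z$‑system is, by the Halmos--von Neumann theorem, a compact \emph{abelian} group — impossible since $L$ is non‑abelian. The real work is to transfer this to the $F_3$‑action: one argues that if $(K\times L,\,m_K\otimes m_L,\,F_3)$ had discrete spectrum then so would its restriction to $\langle c\rangle$, and hence — disintegrating the latter over the ergodic decomposition of the base rotation $x\mapsto cx$ of $(K,m_K)$ into cosets of $K_1$ — so would an ergodic component isomorphic to the system of \cite{Z-77}, a contradiction. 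The delicate point, which I expect to be the main obstacle, is that $K_1\times L$ is $m_K\otimes m_L$‑null, so the non‑equicontinuity of $T_c$ on it is not automatically visible to the Koopman representation of the full system; this must be handled by using the precise form of $\phi$ — e.g.\ by choosing the Borel section $K/K_1\to K$ so that $\phi$ is compatible with $\phi_0$ along every $K_1$‑coset, which makes $(K\times L,\,m_K\otimes m_L,\,T_c)$ factor onto the system of \cite{Z-77}, whence discrete spectrum of the former would force it on the latter. With that in hand, distality, non‑equicontinuity and strong ergodicity are all established.
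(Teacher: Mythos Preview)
Your argument follows the paper's proof essentially line for line: distality from the two–step compact‐extension tower, strong ergodicity by restricting to the $F_2$–action (4), and non‑equicontinuity by reducing to the Zimmer $\Z$–system on $K_1\times L$. The only difference is emphasis: the paper dispatches non‑equicontinuity in one sentence (``the restriction to the $\Z$‑action $T_c\colon K_1\times L\to K_1\times L$ is not equicontinuous''), while you flag the null‑set issue and work out that, by the very definition of $\phi$, the $T_c$–system on $K\times L$ has the Zimmer system as a factor (equivalently, as each ergodic component) --- this is exactly the point the construction of $\phi$ via a coset section is designed to make immediate, so what you call ``the main obstacle'' is in fact built into the setup rather than an additional difficulty.
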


\begin{proof}
%Let $B_n$ denote the ball in $F_3$ of radius $n$ in the word metric and  
%suppose $A_{B_n, \ep}$ are sets of measure $1/2$ in $K \times L$, $(B_n, \ep)$ almost invariant under $T$.
%Let $(X, \mathcal{X}, \la_K, F_{a, b,c})$ be the probability measure preserving system on $X=K$. 
%Then the $L^2$ functions $\mathbb{E}\{\ch_{A_{B_n, \ep}}  |\  \mathcal{X}\}$ are 
%$(B_n, \ep)$ almost invariant, with respect to the $F_{a, b}$ action. 
%Since this action is strongly ergodic these functions are eventually $L^2$ close to a constant function,
%which is $1/2$, as their integral is $1/2$.
%Thus for large enough $n$ and sufficiently small $\ep$,
%over most of the fibers we have that $(A_{B_n, \ep})_x = \{y \in L : (x,y) \in A_{B_n, \ep}\}$ 
%has measure close to $1/2$.
%
%Now consider the cosets with respect to $K_0$ and the action of $F_{a,b} \times F_{g,h}$ on $K_0g \times L, \ g \in K$.
%Since most of the $L$-fibers of $A_{B_n, \ep}$ have measure
%approximately $1/2$, the same is true for most of the $K_0g$ cosets.
%This contradicts the strong ergodicity of the $F_2$ action on $K_0 \times L$.
%Thus the $T$ action on $K \times L$ is indeed strongly ergodic.
Clearly the $T$ action on $K \times L$ is distal (of order $2$). It can not be 
compact because the restriction
to the $\Z$-action $T_c : K_1 \times L \to K_1 \times L$ is not compact.
Finally, by construction, the $F_2$-action on $K \times L$ is strongly ergodic and, 
a fortiori, so is the $T$-action. Our proof is complete.
\end{proof}

\br

\section{Distal strongly ergodic $F_3$-systems of arbitrary countable rank}

With only some minor modifications the proof of Theorem \ref{dnc} can be applied to prove the
following stronger result.

\begin{thm}\label{rank-dnc}
Let $\eta$ be an arbitrary countable ordinal $\eta$. 
Then there exists an action of the free group on three generators $F_3$ on a compact metric space $X$,
admitting an invariant probability measure $\mu$, such that the resulting dynamical system
$(X, \mu, F_3)$ is strongly ergodic and distal of rank $\eta$.
\end{thm}

\begin{proof}
We keep the notations introduced in the previous section with the following modifications:
Let $x=\begin{psmallmatrix}1 & 2\\0 & 1\end{psmallmatrix}$
and $y = \begin{psmallmatrix}1 & 0\\2 & 1\end{psmallmatrix}$
and let $H$ be the finite index subgroup $\langle x, y \rangle$. 
%be two elementary matrices generating a free group, which we 
%call $H$, in $SL_2(\Z)$ (it is actually of finite index).
Let 
%$a=x^2=E_{12}^{(4)}$
$a = x^2 = \begin{psmallmatrix}1 & 0\\4 & 1\end{psmallmatrix}$ 
and 
%$b=y^2=E_{12}^{(4)}$
$b = y^2 = \begin{psmallmatrix}1 & 4\\0 & 1\end{psmallmatrix}$, 
and let $c$ be another element in 
$H$ which together with $a$ and $b$ generate a free group $\La$ on $3$ generators. 
%(such an element clearly exists as a and b generate a subgroup of infinite index in $H$).
%Let now $\La_p = \La/\La(p)$ 
% and $\Sig_p= \Sig/\Sig(p)$ be the images of 
%$\La$ and $\Sig$ in $SL_2(\Z_p)$ 
Recall that for sufficiently large $p$, the image of $S_0 = \{a, b\}$ (and a fortiori that of
the set $S = \{a, b, c\}$) in $SL_2(\Z_p) \cong SL_2(\Z)/ SL_2(p\Z)$
generates $SL_2(\Z_p)$.
Let $P$ be the set of primes for which this is true.

Let $P = \bigcup_{j =0}^\infty P_j$ be some partition of $P$, with each cell $P_j$ being infinite.
Choose an arbitrary bijection 
$$
\Phi : \{1,2,\dots\} \longleftrightarrow 
\{\al < \eta : \al \ {\text{is a successor ordinal}}\}
$$
and set
$$
K = \prod \{\La_p : p\in P_0\},
$$
and
$$
K_{\Phi(j)} = \prod \{\La_p : p\in P_j\},
$$
where $\La_p$ is the image of $\La$ in $SL_2(\Z_p)$.

%Now, with $S_0 = \{a, b\}$ we let, for each $p \in P$, $S_p$ be the image of $S_0$ in $SL_2(\Z_p)$
%and 
We now consider the action of $S$, induced by left multiplication, on $K$ and on each $K_\al$.
As above all of these actions are strongly ergodic, and so is
the action on the product space
\begin{equation}\label{X}
X = K \times L = K \times \prod_{\al} K_\al.
\end{equation}

We now recall the following result from \cite{GW} (adapted to our needs here) :
\begin{thm}\label{thm-main}
Given an arbitrary countable ordinal $\eta$ and a transfinite sequence 
$$
\{G_\al : \al =0 \ {\text{and}}\  \al < \eta,\ \al  {\text{ a successor ordinal}} \},
$$
where for each $\al$ $G_\al$ is an infinite compact second countable topological group,
with the only requirement that $G_0$ be an infinite monothetic compact group $(G_0,c)$,
there exists an ergodic distal system $\Xb = (X, \Xcal, \mu, A)$ of rank $\eta$ such that, 
in its canonical distal tower,
for each successor $\al$ the extension $\Xb_\al \to \Xb_{\al -1}$ is a $G_{\al-1}$-extension,
and such that the induced action of $A$ on $G_0$ is via multiplication by $c$.
\end{thm}

We let  $K_0 =  \ol{\langle c \rangle} < K$, and we consider $(K_0,c)$
as a (compact) $\Z$-dynamical system.
Applying Theorem \ref{thm-main}, with $G_\al = K_\al$ for all $0 \leq \al < \eta$,
we can now realise an action of the group $\langle c \rangle$
on the space $K_0 \times L$ by $T_c(x) = Ax$ (where for a limit ordinal $\xi \leq \eta$
the corresponding system is determined as the inverse limit of the preceding systems 
directed by $\{\al : \al < \xi\}$).

Next we extend the $T_c$ action to the space $X= K \times L$ in (\ref{X}) 
by defining for $x = (k,l) \in K_0 \times L$,
$$
T_c(k, l) = A(k_t^{-1}k,l), \quad {\text{for $k$ in the coset $k_t K_0$}},
$$
where $t \mapsto k_t,  \ K/K_0 \to K_0$ is a Borel section for the map $K_0 \to K/K_0$.

Finally on $K \times L$ define an $F_3$ action $\{T_t\}_{t \in F_3} : K \times L \to K\times L$ by:
\begin{gather*}
T_a(k,l) = (ak, al)\\
T_b(k,l) = (bk, gl)\\
{\text{and $T_c(k,l)$ as above}}.
\end{gather*}
As in the previous section this completes our proof.
\end{proof}

%%%%%%%%%%%%%%%%%%%%%%%%%%%%%%%%%%%%%%%%%%%%%%%%%%%%%%%%%%%%%%%%%
%%%%%%             Bibliography                            %%%%%%
%%%%%%%%%%%%%%%%%%%%%%%%%%%%%%%%%%%%%%%%%%%%%%%%%%%%%%%%%%%%%%%%%

\end{document}